\documentclass[review]{elsarticle}

\usepackage{hyperref}

\journal{Applied and Computational Harmonic Analysis}









\bibliographystyle{elsarticle-num}


\usepackage{amsthm, amssymb, amsmath, latexsym, amsfonts, mathcomp}
\usepackage{comment}
\usepackage{xcolor}

\newcommand{\N}{\mathbb{N}}

\newcommand{\R}{\mathbb{R}}

\newtheorem{theorem}{Theorem}[section]

\newtheorem{definition}[theorem]{Definition}
\newtheorem{proposition}[theorem]{Proposition}

\DeclareMathOperator{\suppp}{supp \,}

\newcommand*{\ms}[1]{{\color{black} #1}}
\newcommand*{\cl}[1]{{\color{black} #1}}
\newcommand*{\pp}[1]{{\color{black} #1}}

\begin{document}

\begin{frontmatter}

  \title{Bendlets:\\[5pt] \large{A Second-Order Shearlet Transform with Bent Elements}}

\author{Christian Lessig\fnref{cl}}
\author{Philipp Petersen\fnref{pp}}
\author{Martin Sch{\"a}fer\fnref{ms}}
\address{Technische Universit{\"a}t Berlin}
\fntext[cl]{christian.lessig@tu-berlin.de}
\fntext[pp]{philipp.petersen@tu-berlin.de}
\fntext[ms]{schaefer@math.tu-berlin.de}




\begin{abstract}
We introduce bendlets, a shearlet-like system that is based on anisotropic scaling, translation, shearing, \emph{and bending} of a compactly supported generator.
With shearing being linear and bending quadratic in spatial coordinates, bendlets provide what we term a second-order shearlet system.
As we show in this article, the decay rates of the associated transform enable the precise characterization of location, orientation \emph{and curvature} of discontinuities in piecewise constant images.
These results yield an improvement over existing directional representation systems where curvature
only controls the constant of the decay rate of the transform. 
{We also detail the construction of shearlet systems of arbitrary order.}
A practical implementation of bendlets is provided as an extension of the ShearLab toolbox, which we use to verify our theoretical classification results
\end{abstract}

\begin{keyword}
Shearlets\sep curvature\sep compactly supported generator
\MSC[2010] 42C15, 42C40
\end{keyword}

\end{frontmatter}


\section{Introduction}

Many images consist of regions separated by piecewise smooth curves.
In photographs the regions correspond to different objects and spatial occlusion between them gives rise to edges; in medical imaging curves separate bones and different kinds of soft tissue; and in geoscience the boundaries between different geological strata lead to discontinuities in measured data, to name a few examples.
The curves separating the different regions thus provide valuable information about an image's structure and content~\cite{Nitzberg1993} so that knowing their location and geometric properties facilitates many image processing, analysis and understanding tasks.

In the last years, it has been shown that directional representation systems~\cite{Candes2005a,Candes2005b,Do2005a,Kutyniok2012} are highly efficient at extracting and characterizing boundary curves in piecewise smooth images. In contrast to regular wavelets, these systems are not only able to precisely locate the boundaries but also provide information on their normal directions. 
In particular, the points and orientations in parameter space where the associated directional transforms decay slowly correspond to the positions and normal directions of the curves.
For shearlets, it has been furthermore shown that they have the ability to detect non-smooth corner points on the curves and to analyze their properties~\cite{GuoL2009detect2D,Kutyniok2016}.

In contrast to their ability to detect local orientation, existing directional representation systems cannot precisely classify curvature, despite the importance of this information for the description and analysis of boundary segments~\cite{Kutyniok2016}. 
In this paper, we address this shortcoming and introduce an extension of shearlets that allows for the precise classification also of curvature.
This is enabled by improving the adaptivity of the system.
Conventional directional systems typically accomplish adaptivity by
utilizing some form of anisotropic scaling, e.g. $\alpha$-scaling with $\alpha\in[0,1)$~\cite{GKKSAlpha16}, and controlling the location and orientation of the system elements.
To further increase the adaptivity, we introduce bending as another degree of freedom for the transform elements, which we, therefore, call bendlets.
This enables that the elements can align with the local bending of the boundary curves which is the key for the classification of curvature.
Since shearing has linear and bending quadratic dependence on spatial coordinates, bendlets can be considered as a second-order shearlet system.
Indeed, they are a special case of shearlet systems of arbitrary orders, whose construction we present in Section \ref{sec:construction}.


As Theorem~\ref{thm:mainClass}
will show, with bendlets the precise location, orientation and curvature of a discontinuity curve are characterized by the decay rates of the associated transform as a function of its parameters.
Bendlets hence provide more detailed information than is available with first-order shearlets, where the information on curvature is contained in the less accessible
constant and not the rate, cf.~\cite{Kutyniok2016}.

An interesting aspect of \cl{our} result is that not parabolic scaling but $\alpha$-scaling with $\frac{1}{3}<\alpha<\frac{1}{2}$ is required.
\cl{If we would employ parabolic scaling, the support of bendlets would not be sufficiently anisotropic to result in decay rates that separate different curvatures for different values of the bending parameter.
With $\frac{1}{3}<\alpha<\frac{1}{2}$, in contrast, the more elongated support ensures that even when location and orientation are appropriate, a bendlet overlaps the boundary curve only in a small region, resulting in a small transform, unless the bending matches the curvature.
We will discuss the necessity for non-parabolic scaling further in Section~2.3 and the formal reasons will become apparent in the proof of Theorem~3.1.}

In addition to their improved ability to characterize discontinuity curves,
bendlets also promise advantages for image approximation.
\cl{For cartoon-like images\ms{, i.e., $C^2$ regular signals with $C^2$ boundaries,} parabolically scaled directional representation systems, such as classical curvelets and shearlets, 
provide the optimal approximation rate $N^{-2}$~\cite{Donoho2001} up to a log-factor~\cite{CD04}.}
%
The reason lies in the fact that $C^2$ boundary curves can be represented locally in the form \cl{$(x,E(x))\in\R^2$} with $E(x)$ a $C^2$ function satisfying $E^\prime(0)=E(0)=0$.
A Taylor expansion thus yields the local approximation $E(x)\approx \frac{1}{2} E^{\prime\prime}(0) x^2 $, which matches the parabolic scaling law $width\approx length^2$ of the system elements.
For the same reason, $\alpha$-scaled systems are optimally adapted to resolve $C^k$ edges \ms{if $k\in(1,2]$} and $\alpha=k^{-1}$~\cite{GKKScurve2014}.
%
However, when the boundary curves are $C^k$ with $k > 2$ the optimal approximation rate of $N^{-k}$ cannot be achieved by conventional $\alpha$-scaled systems~\cite{MS2016}.
Furthermore, existing results (see e.g.\ \cite[Thm.~5.3]{MS2016}) demonstrate that this bound can not be overcome by changing the scaling behavior alone.
Turning to bendlets or even higher-order shearlets, \cl{which adapt to the boundary curve using additional degrees of freedom,} seems to provide a feasible route to move to approximation rates beyond the $N^{-2}$ barrier~\cite{Donoho2001,MS2016}; \cl{see Sec.~\ref{sec:conclusion} for a heuristic argument.}
\pp{While we deem the verification of these improved approximation rates to be beyond the scope of the present paper, we provide precise decay estimates of the bendlet coefficients. Such estimates are an essential part in all known approximation results for shearlets for piecewise smooth functions.}



\subsection{Related Work}

Since their invention in 2005, shearlet systems~\cite{GKL05} have been established in applied
and computational harmonic analysis as efficient representation systems, in
particular for image data. Their success is due to their superior approximation performance for images compared to wavelets. In fact, they share the (quasi) optimal approximation properties of curvelets~\cite{CD04}, yet they are better suited for digital implementation due to the utilization of shears instead of rotations. Moreover, in contrast to curvelets, frames of compactly supported shearlets~\cite{Kittipoom2011} are available.

The (quasi) optimal performance of shearlets with respect to $C^2$-regular
boundary curves was first shown for the classic band-limited 
construction~\cite{Guo2007}. 
Later it was also verified for systems with compact support~\cite{Kutyniok2010}.
As the heuristic Taylor-argument in the previous subsection suggests, the type of scaling plays a pivotal role for the approximation performance.
In fact, the common parabolic scaling law of shearlets and curvelets explains their similar approximation properties. 
This connection 
is made precise by the framework of $\alpha$-molecules~\cite{GKKSAlpha16}, confirming that the type of scaling is in this case a decisive property for approximation.
The influence of the scaling on approximation performance motivated the introductionof $\alpha$-scaled versions of shearlets~\cite{Kutyniok2012a} and curvelets~\cite{GKKScurve2014}. 
Their approximation
performance in the range $\alpha\in[\frac{1}{2},1)$ has been analyzed in \cite{GKKScurve2014, Kutyniok2012a, GKKSAlpha16}.

Besides approximation, shearlets also provide a powerful tool for feature analysis of functions.
It was first demonstrated in~\cite{Kutyniok2008} that, analogous to the curvelet transform~\cite{Candes2005a}, the shearlet transform provides a precise characterization of the wavefront set, which corresponds to the aforementioned boundary curves between image regions.
In particular, the wavefront set is characterized as those positions and orientations in parameter space where the transform decays slowly for increasing scales.
By now, the analysis of edges and singularities of functions utilizing a continuous shearlet transform is a well-established area of research~\cite{Kutyniok2012}.

These results have been generalized in~\cite{Kutyniok2008} for the so-called classical shearlet.
In~\cite{Grohs2011wavefront} these results were subsequently generalized to include also compactly supported shearlet generators.
It has also been analyzed to what extent more general dilation groups than those employed for the standard shearlet system can characterize the wavefront set~\cite{Fell2015}.

Recent work furthermore established that the shearlet transform provides geometric information that goes beyond the wavefront set, for which it is sufficient that the transform decays slower than any polynomial.
For a bounded domain $B \subset \R^2$ with piecewise smooth boundary $\partial B$, it was shown in~\cite{GuoL2009detect2D} and~\cite{GuoLL2009detect2D} that using a classical shearlet one can associate precise decay rates to the transform of the characteristic function $\chi_B$ of $B$ so that points on $\partial B$ and the corresponding normal direction can be detected as well as points where $\partial B$ is not smooth.
An edge detection algorithm based on shearlets was implemented in~\cite{YiLEK2009edgesnumeric}.

Numerous generalizations of the just described method have been developed, including a generalization to 3D domains in~\cite{GuoL2011detect3D}, with a particular emphasis on detecting line singularities within the 2D manifolds given by the boundaries~\cite{HouL2016curvesinsurfaces}.
In another line of research, the region $B$ is no longer required to be constant but can be smooth, see \cite{GuoL2016piecewisesmooth}.
All the results above deal with band-limited generators.
In~\cite{Kutyniok2016} the decay of a continuous shearlet transform with a compactly supported generator was analyzed.
It was demonstrated that a similar classification of edges as with band-limited generators could be achieved in both 2D and 3D, with the additional improvement that the decay rates are uniform, meaning they can be analyzed in a pre-asymptotic regime.
Furthermore, it was also established that information on the curvature of $\partial B$ could be extracted from the shearlet transform in a weak form. This is, however, very different from the results in the present paper that provide an exact description of the asymptotic behavior of a higher-order shearlet transform for different curvatures.

\subsection{Outline and Contribution}

The principal motivation for the present work is the classification of singularities in images. To this end, we introduce a novel directional multiscale system called bendlets, which is an extended shearlet system utilizing bending as an additional parameter.
%
{It can be regarded as a second-order shearlet system,
and in Section~\ref{sec:construction} 
we detail its construction as a special case of shearlets of arbitrarily high order.}

%
%
In Section~\ref{sec:ClassResults}, we investigate the ability of bendlets to classify discontinuity curves in images.
Our main result on classification, Theorem~\ref{thm:mainClass}, shows that with an anisotropic scaling of $1/3<\alpha<1/2$ bendlets have decay rates that allow to precisely identify location, direction, and curvature of a discontinuity curve.
This improves upon existing directional representation systems where curvature only controls the constant of the decay rate of the transform, cf.~\cite{Kutyniok2016},
and is thus less accessible.
Given the importance of curvature in differential geometry to describe and characterize shapes, we believe that bending is a natural and important parameter to be taken into account.


Our theoretical results are verified with numerical experiments in Section~\ref{sec:numerics}.
The implementation of bendlets as an extension to ShearLab~\cite{Kutyniok2016b} indicates that these might, in fact, provide a practical alternative to shearlets.
Towards this end, and in the spirit of reproducible computational science~\cite{Donoho2009}, we release our source code on \texttt{www.shearlab.org}.\footnote{\url{http://www3.math.tu-berlin.de/numerik/www.shearlab.org/software\#bendlets}}

\section{Construction}
\label{sec:construction}

\ms{The main objective of this section is the introduction
of higher-order shearlets as a generalization of the classical shearlet
systems.
Their construction} differs from that of classical shearlets in that $\alpha$-scaling \ms{instead of parabolic scaling} and a higher-order variant of the shearing operator are employed.

%


For $a>0$ and \ms{$\alpha\in[0,1]$} 
the \emph{$\alpha$-scaling operator} is defined by
\begin{align}\label{eqdef:alpmat}
A_{a,\alpha}: = \left(\begin{array}{l l} a & 0\\ 0 &a^\alpha \end{array}\right)
\end{align}
where the parameter $\alpha$ determines the scaling anisotropy. 
For example, the choice $\alpha=1$ leads to isotropic scaling, $\alpha=\frac{1}{2}$ to parabolic scaling,
and $\alpha=0$ to pure directional scaling.

For $l\in \N$ and $r=(r_1,\ldots,r_l)^T\in\R^l$ we further define the \emph{$l$-th order shearing operator} $S_{r}^{(l)}: \R^2 \to \R^2$ by
\begin{align}\label{eqdef:genshear}
S_{r}^{(l)}(x) &:= \left(\begin{array}{c c} 1 & \sum_{m = 1}^l r_m x_2^{m-1}\\ 0 & 1\end{array}\right) (x_1,x_2)^T .
\end{align}
For $l=1$ this yields an ordinary shearing matrix and for $l=2$ we obtain an operator implementing shearing and bending.

\subsection{The higher-order shearlet \ms{systems and transforms}}

We begin the construction of higher-order shearlet system by introducing the generating parameter set.

\begin{definition}
The \emph{$l$-th order $\alpha$-shearlet parameter set} $\mathbb{S}^{(l,\alpha)}$ is
$$
\mathbb{S}^{(l,\alpha)}: = \R^+ \times \R^l \times \R^2 .
$$
\end{definition}

In contrast to classical shearlets that can be regarded as being generated by the action of a group on a generating function~\cite{ShearletGroup, DKST2009}, no such structure exists for higher order shearlets with $l>1$.
The generalized shearing in~\eqref{eqdef:genshear} yields for $l > 1$ a composition that does not satisfy associativity.
This restriction notwithstanding, for $l=1$ and $\alpha=1/2$ the above definition yields the classical shearlet group.

\begin{definition}\label{def:rep} 
Let $\psi \in L^2(\R^2)$, $l\in\N$, \ms{$\alpha\in[0,1]$}. Then the action
\[
\pi^{(l,\alpha)}: \mathbb{S}^{(l,\alpha)} \to \mathrm{Aut}(L^2(\R^2))
\]
of the higher order shearlet parameter set $\mathbb{S}^{(l,\alpha)}$ on $L_2(\R^2)$ is given by
\begin{align}\label{eq:theRep}
    \pi^{(l,\alpha)}(a,r,t)\psi & := a^{-(1+\alpha)/2}\psi\left( A_{a,\alpha}^{-1} S_{-r}^{(l)}(\cdot - t)\right).
\end{align}
\end{definition}

With the above definitions we can now introduce $l$-th order $\alpha$-shearlet systems.

\begin{definition}
Let $\psi \in L^2(\R^2)$, $l\in\N$, \ms{$\alpha\in[0,1]$}.
Using the representation $\pi^{(l,\alpha)}$ from Definition~\ref{def:rep}, we define the \emph{\ms{continuous} $l$-th order $\alpha$-shearlet system}:
\begin{align*}
    \mathrm{SH}_{\psi}^{(l,\alpha)} := \Big\{ \pi^{(l,\alpha)}(a,r,t)\psi \, \left\vert \, (a,r,t) \in \mathbb{S}^{(l,\alpha)} \right. \Big\} .
\end{align*}
The associated \emph{continuous $l$-th order $\alpha$-shearlet transform} $\mathcal{SH}_{\psi}^{(l,\alpha)}(f)$ of a function $f\in L^2(\R^2)$ is the function on 
$\mathbb{S}^{(l,\alpha)}$ given by
\begin{align*}
    \mathcal{SH}_{\psi}^{(l,\alpha)}\big(f\big)(a,r,t) := \big\langle f \, , \, \pi^{(l,\alpha)}(a,r,t)\psi \big\rangle.
\end{align*}
\end{definition}

\ms{The systems $\mathrm{SH}_{\psi}^{(1,\alpha)}$ with $l=1$
are just conventional first-order $\alpha$-shearlet systems.
Systems of higher order, i.e., $\mathrm{SH}_{\psi}^{(l,\alpha)}$ with $l\ge2$,
extend those and 
provide larger dictionaries, 
containing functions better suited for the sparse representation of curved discontinuities (see Section~\ref{sec:conclusion}).} 


\ms{Concerning the associated transforms,
it is easy to verify that for a signal $f\in L^2(\R^2)$
the transform $\mathcal{SH}_{\psi}^{(l,\alpha)}(f)$ is a bounded measurable function on $\mathbb{S}^{(l,\alpha)}$.
Increasing the order \cl{provides} additional information about $f$, which, \cl{for example,} can be exploited for signal analysis.}

\ms{Subsequently, we are interested in the extraction of curvature information from discontinuity curves in a signal, which the first-order shearlet transforms cannot provide.
As we will see, a second-order transform is well suited for this task.
In the remainder of the paper, we therefore restrict our investigation to the case $l = 2$.
Nonetheless, we expect that the additional information which is provided by higher-order transforms with $l>2$ will also be useful for more complex analysis tasks, for example, if one is
interested in an even finer characterization of discontinuities.}

\ms{
To simplify notation in the second-order case, we subsequently write $r = (s,b)$ such that $s$ takes the role of shearing and $b$ corresponds to a bending of the functions $\pi^{(2,\alpha)}(a,r,t)\psi$. Since bending is the most characteristic property, we call the associated transform a \emph{bendlet transform} and the associated system a \emph{bendlet system}. \ms{We use the simplifying} notation
\begin{align*} 
\mathrm{BS}_{\psi}^{(\alpha)} := \mathrm{SH}_{\psi}^{(2,\alpha)}
=\Big\{ \psi_{a,s,b,t} \,\left\vert \, (a,s,b,t) \in \mathbb{S}^{(2,\alpha)} \right. \Big\},
\end{align*}
where \ms{$\psi_{a,s,b,t}:=\pi^{(2,\alpha)}(a,s,b,t)\psi$} and \ms{for $(a,s,b,t) \in \mathbb{S}^{(2,\alpha)}$}
\begin{align}\label{eq:bendlet_transform}
    \mathcal{BS}_{\psi}^{(\alpha)}\big(f\big)(a,s,b,t) \ms{:= \ms{\mathcal{SH}_{\psi}^{(2,\alpha)}\big(f\big)(a,s,b,t)}} = \big\langle f \, , \, \psi_{a,s,b,t} \big\rangle .
\end{align}
}



\subsection{\ms{Heuristic} classification of curvature}
\label{sec:classofCurv}

In \ms{this} \cl{subsection},
we \ms{start with a heuristic argument to provide some intuition} why the bendlet transform $\mathcal{BS}_{\psi}^{(\alpha)}$ enables to classify position, normal, and curvature \ms{of a boundary curve in an image.}

\begin{figure}
\centering
\includegraphics[width = \textwidth]{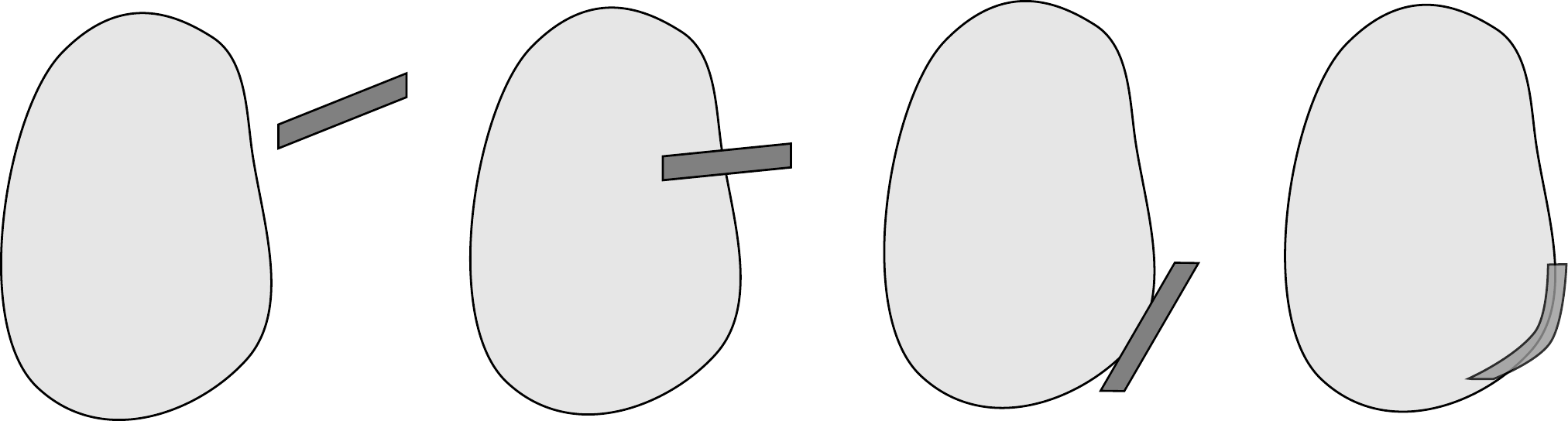}
\put(-360,90){a)}
\put(-245,90){b)}
\put(-160,90){c)}
\put(-70,90){d)}
\caption{Different scenarios that we wish to classify depending on the behavior of the shearlet transform for $a\to 0$.}
\label{fig:Cases}
\end{figure}

Assume we are given a domain $D\subset \R^2$ with $C^{\infty}$ boundary $\partial D$ and denote the characteristic function of $D$ by $\chi_D$.
We consider the bendlet transform
\begin{align*}
    \mathcal{BS}_\psi^{(\alpha)}\big( \chi_D \big)(a,s,b,t) = \big\langle \chi_D \, , \, \psi_{a,s,b,t} \big\rangle \ms{\quad\text{for}\quad (a,s,b,t)\in\mathbb{S}^{(2,\alpha)}}.
\end{align*}
The goal is to extract information about the boundary $\partial D$, in particular its position, normal direction and curvature, using the asymptotic behavior of
$$
\vert \mathcal{BS}_\psi^{(\alpha)}\big( \chi_D \big)(a,s,b,t) \vert \quad\text{for}\quad a \to 0,
$$
i.e.\ in the asymptotic regime of the scale parameter.

We consider four different cases, as depicted in Figure \ref{fig:Cases}:
\begin{enumerate}
\item[a)] The translation parameter $t$ is not on the boundary curve, i.e., $t \not \in \partial D$.
\item[b)] $t \in \partial D$ but $s$ does not correspond to the normal of $\partial D$ at $t$.
\item[c)] $t \in \partial D$ and $s$ corresponds to the normal of $\partial D$ at $t$, but $b$ does not correspond to the curvature of $\partial D$ at $t$.
\item[d)] $t \in \partial D$ and $s$ corresponds to the normal of $\partial D$ at $t$ and $b$ corresponds to the curvature of $\partial D$ in $t$.
\end{enumerate}

In Theorem \ref{thm:mainClass} in Section \ref{sec:ClassResults} we will demonstrate that the bendlet transform indeed provides a different asymptotic behavior for each of the cases above. Nonetheless, one can already obtain a good intuition why such a classification is possible by \ms{heuristically} analyzing a simple model case.

Let us examine the transform \ms{$\mathcal{BS}_\psi^{(\alpha)}\big( \chi_D \big)$} \ms{of the unit ball $D = B_1(0)$ in $\R^2$ centered at the origin,} in which case $\partial D = S_1: = \{x\in \R^2: \|x\| = 1\}$.
As generator of the shearlet system, we use a function $\psi \in L^2(\R^2)$ with compact support \ms{and at least} one vanishing moment, i.e., $\int \psi \, dx = 0$.
Usually, we also assume some conditions on \ms{the} smoothness of $\psi$. \ms{For }the following heuristic argument, we will omit an explicit requirement on the smoothness, \ms{however}.
\ms{It will} only play a role in the \ms{rigorous, somewhat technical,} proof of the decay rate of case b), \ms{discussed in Sec.~\ref{sec:ClassResults}}.

\ms{In case a), i.e., if $t\not \in  \partial D$,} it is clear that for $a$ sufficiently small $\suppp \psi_{a,s,b,t} \cap S_1 = \emptyset$. Hence\ms{, since $\psi$ has at least one vanishing moment, the transform $\mathcal{BS}_\psi^{(\alpha)}\big( \chi_D \big)(a,s,b,t)$} will be zero for sufficiently small $a$.

\ms{Let us proceed with} case c), where we assume $t = (1,0)$ and $s = 0$. 
%
%
%
Using \ms{the second-order} Taylor approximation
\begin{align}\label{eq:secordTay}
x_1\approx 1- \frac{1}{2}x_2^2
\end{align}
of the boundary curve of $B_1(0)$ in $(1,0)$,
we can approximate $\chi_{B_1(0)}$ in a neighborhood of $(1,0)$ by
$$
\chi_{\tilde{D}}, \text{ where } \tilde{D}: = \{ x\in \R^2: x_1 - 1 \leq -\frac{1}{2}x_2^2 \}.
$$
Furthermore, there exists a constant $C>0$, independent of $a$, such that
$$
\suppp \psi_{a,0,0,(1,0)}\subseteq [1-Ca,1+Ca]\times [-Ca^\alpha,Ca^\alpha]
$$
since the shearlets $\psi_{a,0,0,(1,0)}$ are $\alpha$-scaled and stem from a compactly supported generator $\psi$.
Let us estimate the size of the overlap $|\suppp \psi_{a,0,0,(1,0)} \cap B_1(0)|$. 
We easily observe that 
$$
|\suppp \psi_{a,0,0,(1,0)} \cap \tilde{D} | \subseteq \{ x\in \R^2: 1- Ca \le x_1 \le 1-\frac{1}{2}x_2^2 \}.  
$$
This yields the estimate
$$
|\suppp \psi_{a,0,0,(1,0)} \cap B_1(0)| \approx |\suppp \psi_{a,0,0,(1,0)} \cap \tilde{D} |  \lesssim a a^{1/2} = a^{\frac{3}{2}} \,,
$$
and consequently
$$
|\langle \psi_{a,0,0,(1,0)}, \chi_{B_1(0)} \rangle| \lesssim a^{-\frac{1+\alpha}{2}}a^{\frac{3}{2}} = a^{\frac{2-\alpha}{2}}.
$$
Next, we examine case d). Let us again assume $t = (1,0),\, s = 0$, but now  with the \ms{parameter $b =- \frac{1}{2}$ matching the second-order Taylor
coefficient in~\eqref{eq:secordTay}}. 
As before, we use the local approximation $B_1(0)\approx \tilde{D}$ near $(0,1)$ and
compute
\begin{align*}
\langle \psi_{a,0,-\frac{1}{2},(1,0)}, \chi_{B_1(0)} \rangle \approx \ &\langle
\psi_{a,0,-\frac{1}{2},(1,0)}, \chi_{\tilde{D}} \rangle\\
= \ &a^{-\frac{1+\alpha}{2}}\int_{ x_1 \leq 1-\frac{1}{2} x_2^2} \psi\left( A_{a,\alpha}^{-1}
S_{(0,\frac{1}{2})}^{(2)}(\cdot - (1,0))\right) \, dx .
\end{align*}
We simplify by applying the shift $x_1 \mapsto x_1 +1$ which yields
\begin{align*}
\langle \psi_{a,0,-\frac{1}{2},(1,0)}, \chi_{B_1(0)} \rangle \approx \
&a^{-\frac{1+\alpha}{2}}\int_{ x_1 \leq - \frac{1}{2} x_2^2} \psi\left( A_{a,\alpha}^{-1}
S_{(0,\frac{1}{2})}^{(2)}(x)\right) \, dx .
\end{align*}
We then apply the bending transformation $x \mapsto S_{-(0,\frac{1}{2})}^{(2)}(x)$ and obtain
\begin{align*}
&a^{-\frac{1+\alpha}{2}}\int_{ x_1 \leq -\frac{1}{2} x_2^2}\psi\left( A_{a,\alpha}^{-1}
S_{(0,\frac{1}{2})}^{(2)}(x)\right) = a^{-\frac{1+\alpha}{2}}\int_{ x_1 \leq 0} \psi\left(
A_{a,\alpha}^{-1} x\right)\, dx.
\end{align*}
We can make another transformation, using the scaling $x \mapsto A_{a,\alpha}x$, to
obtain
\begin{align*}
\langle \psi_{a,0,-\frac{1}{2},(1,0)}, \chi_{B_1(0)} \rangle \approx
a^{\frac{1+\alpha}{2}}\int_{ x_1 \leq 0} \psi(x) \, dx.
\end{align*}

If we make the assumptions $\int_{ x_1 \leq 0} \psi(x) \, dx \neq 0$ and $\alpha
<1/2$, \ms{which ensures $2-\alpha>1+\alpha$}, we see that \ms{for $a\to0$ the decay of $\mathcal{BS}_\psi^{(\alpha)}\big( \chi_D \big)(a,s,b,t)$ in case d) is slower compared to the cases a) and c).
This observation can be explained by the fact that the overlap of the shearlets with the boundary curve shrinks more quickly in the other cases.
Moreover, the heuristic for case d) provides us with a lower bound.
Only this bound allows} us to distinguish this case \ms{strictly} from the others via the decay of the transform \ms{$\mathcal{BS}_\psi^{(\alpha)}\big( \chi_D \big)$}.
%

We will later see that a
similar treatment of case b) is possible as well and one also obtains faster decay in
that case than in case d). Consequently, \ms{the first-order and second-order Taylor coefficients} 
at a point $t$ on the boundary can be found by examining the transform \ms{$\mathcal{BS}_\psi^{(\alpha)}\big( \chi_D \big)(a,s,b,t)$} for different values $(s,b)$ and picking the one with the slowest decay
\ms{for $a\to0$}. The values $s$ and $b$
then describe the normal direction and curvature at position $t$ \ms{via the general formula given in \eqref{eq:curvature} below.}

\subsection{Cone-adapted transform}

The classification procedure outlined in the previous subsection will not work if the unit normal is $(0,1)$ or $(0,-1)$ at some point along the boundary $\partial D$. \cl{
\ms{The reason is that} orientation is changed by shearing along the \ms{first axis and hence no bendlet can have an orientation aligned with it}.
Furthermore, normals close to $(0,1)$ or $(0,-1)$ require an extreme shearing that renders the transform ill-suited for numerical computations.
Following previous work on shearlets~\cite{Guo2006, Kutyniok2008, GuoL2009detect2D}, we hence introduce a cone-adapted version of the bendlet transform $\mathcal{BS}_{\psi}^{(\alpha)}$ that avoids these problems by working over two cones obtained by dividing $\R^2$ along $y=x$ and $y=-x$: a horizontal cone that covers orientations around $(1,0)$ or $(-1,0)$ and a vertical cone for orientations around $(0,1)$ or \ms{$(0,-1)$}.}

\begin{definition}\label{def:ConeAdaptedTransform}
Let $\psi\in L^2(\R^2)$ and \ms{$\alpha \in [0,1]$}. We define $\tilde{\psi}(x_1,x_2): = \psi(x_2,x_1)$ and \ms{for $(a,s,b,t)\in\mathbb{S}^{(2,\alpha)}$} 
\begin{align*}
\tilde{\pi}^{(2,\alpha)}(a,s,b,t) \tilde{\psi}: = a^{-(1+\alpha)/2}\tilde{\psi}\left( \tilde{A}_{a,\alpha}^{-1} \tilde{S}_{-(s,b)}^{(2)}(\cdot - t)\right),
\end{align*}
where
$$
\tilde{A}_{a,\alpha} := \left( \begin{array}{c c}
a^{\alpha} & 0 \\
0 & a
\end{array} \right), \quad \tilde{S}_{(s,b)}^{(2)} := \left( S_{(s,b)}^{(2)} \right)^T.
$$
\ms{For $(a,s,b,t)\in\mathbb{S}^{(2,\alpha)}$ we then introduce
\begin{align*}
\psi_{a,s,b,t,1} :=\pi^{(2,\alpha)}(a,s,b,t)\psi, \quad \psi_{a,s,b,t,-1} :=\tilde{\pi}^{(2,\alpha)}(a,s,b,t)\tilde{\psi},
\end{align*}
and define the \emph{cone-adapted bendlet system}} by
$$
\Big\{\psi_{a,s,b,t,\iota} \,\left\vert \, a \in (0,1), s\in [-1,1], b\in \R, t\in \R^2, \iota \in \{-1,1\} \right. \Big\}.
$$
Moreover, we define the \emph{cone-adapted bendlet transform} by
\ms{
\begin{align*}
\mathcal{BS}^{(\alpha)}_\psi \big(f\big)(a,s,b,t,\iota) := \big\langle f \, , \, \psi_{a,s,b,t,\iota} \big\rangle,
\end{align*}
where $a\in(0,1)$, $s\in [-1,1]$, $b\in\R$, $t\in \R^2$, and $\iota\in\{-1,1\}$.}
\end{definition}

\ms{
In the above definition, we only consider the second-order case, i.e., bendlets, but analogous definitions can be made for arbitrary orders.
Note, that for each cone we only need shears in the range $s\in [-1,1]$. Further, we can conveniently restrict to scales $a\in(0,1)$ since only the high scales $a\to 0$} \cl{are of relevance in the remainder of the paper.}

To simplify the exposition in the following arguments, we end this section with the following very practical definition.
\begin{definition}
  \label{def:boundary_local_second_order}
Let $D \subset \R^2$ such that $\partial D \in C^{\infty}$. Then for every point $(p_1,p_2) = p\in \partial D$ we have that $\partial D$ is locally the graph of at least one of the functions
\begin{align*}
g_{1}(x_2) &= p_1 + s' (x_2-p_2) + b' (x_2 - p_2)^2 + H(x_2-p_2) \text{ or }\\
g_{-1}(x_1) &= p_2 + s' (x_1-p_1) + b' (x_1-p_1)^2 + H(x_1 - p_1),
\end{align*}
where $s' \in [-1,1]$, $b' \in \R$ and $|H(z)/z^3| \leq c_H$ for $z\to 0$.
If $\partial D$ is locally given by $g_1$ in a neighborhood of $p$, then we call $p$ \emph{a point of type $(s', b', 1)$}. If it is given by $g_{-1}$, then it is called a \emph{point of type \ms{$(s', b', -1)$}}.
\end{definition}

\cl{The information provided by $(s', b', \cdot )$ is sufficient to determine normal and curvature at $p \in \partial D$.
A normal vector at $p \in \partial D$ is given by $(1,-s')$ or $(-s',1)$, depending on whether $g_{1}$ or $g_{-1}$ is used.
Further, matching the second order Taylor expansion of the boundary to a circle \ms{of radius $r>0$}, whose curvature is $1/r$, one obtains
\begin{align}\label{eq:curvature}
  K = \ms{\frac{2 \vert b' \vert}{(1+(s')^2)^{3/2}}}
\end{align}
for the curvature at $p\in\partial D$.}


\section{The Classification Result}\label{sec:ClassResults}

The following theorem describes the classification of different points by the cone-adapted transform of Definition \ref{def:ConeAdaptedTransform}.

\begin{theorem}\label{thm:mainClass}
Let $0<\alpha\leq1/2$, $D \subset \R^2$ and $\partial D\in C^{\infty}$, let $\psi = \psi^1\otimes \phi^1$, where $\psi^1, \phi^1 \in L^2(\R)$ have compact support and $\psi^1$ is bounded and has $M\in \N$ vanishing moments and $\phi^1 \in C^L$ with $L>M$. Then the following holds.
\begin{enumerate}
\item If $p\not \in \partial D$ then for all $a\in (0,1)$, $s\in [-1,1]$ and $b\in \R$, $\iota \in \{-1,1\}$
$$
|\mathcal{BS}^{(\alpha)}_\psi(\chi_D)(a,s,b,p,\iota)| \lesssim a^{N},
$$
for all $N\in \N$.
\item If $p \in \partial D$ and $p$ is a point of type $(s',b',\iota')$ then
\begin{enumerate}
\item If $(s,\iota) \neq (s',\iota ')$ then
\begin{align*} 
|\mathcal{BS}^{(\alpha)}_\psi(\chi_D)(a,s,b,p,\iota)| \lesssim a^{(1-\alpha)(M+1) + \frac{1+\alpha}{2}}.
\end{align*}
\item If $(s,\iota) = (s',\iota ')$ but $b' \neq b$ then
$$
|\mathcal{BS}^{(\alpha)}_\psi(\chi_D)(a,s,b,p,\iota)| \lesssim a^{\frac{2-\alpha}{2}}.
$$
If, furthermore, $\phi^1(0)$, $\int \limits_{x_1 \leq -x_2^2} \psi^1(x_1) dx$, and $\int \limits_{x_1 \geq x_2^2} \psi^1(x_1) dx$ are all non-zero, then we also have
\begin{align} \label{eq:thePositiveLimit}
\lim_{a \to 0} a^{\frac{\alpha-2}{2}} |\mathcal{BS}^{(\alpha)}_\psi(\chi_D)(a,s,b,p,\iota)|>0.
\end{align}
\item If $(s,b,\iota) = (s',b',\iota')$ then
$$
|\mathcal{BS}^{(\alpha)}_\psi(\chi_D)(a,s,b,p,\iota)| \lesssim a^{\frac{1+\alpha}{2}}.
$$
Furthermore, if $\alpha > 1/3$ we also have the lower bound
$$
|\mathcal{BS}^{(\alpha)}_\psi(\chi_D)(a,s,b,p,\iota)| \gtrsim a^{\frac{1+\alpha}{2}} |\int_{x_1 \geq 0} \psi(x) dx|.
$$
\end{enumerate}
\end{enumerate}
\end{theorem}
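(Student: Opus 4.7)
The overarching strategy is to change variables in the bendlet transform so that the generator $\psi=\psi^1\otimes\phi^1$ becomes the ``reference frame'' and the boundary curve of $D$ becomes a curve $y_1 = F(y_2)$ depending on the mismatch between $(s,b,\iota)$ and the local boundary type $(s',b',\iota')$. Concretely, for $\iota=1$ one uses $y = A_{a,\alpha}^{-1}S_{-(s,b)}^{(2)}(x-p)$ which yields
\begin{align*}
\mathcal{BS}^{(\alpha)}_\psi(\chi_D)(a,s,b,p,1) = a^{(1+\alpha)/2}\int \chi_{\tilde D}(y)\,\psi^1(y_1)\phi^1(y_2)\,dy,
\end{align*}
with the local boundary inside $\tilde D$ given by $y_1\le F(y_2)$ (or $\ge$) where
\begin{align*}
F(y_2) = (s'-s)\,a^{\alpha-1}y_2 + (b'-b)\,a^{2\alpha-1}y_2^2 + a^{-1}H(a^\alpha y_2).
\end{align*}
Using the tensor structure, the inner integral reduces to $\int G(F(y_2))\phi^1(y_2)\,dy_2$ where $G(t) := \int_{-\infty}^t\psi^1$ is compactly supported (by one vanishing moment of $\psi^1$) and inherits $M-1$ vanishing moments. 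The $\iota=-1$ case is symmetric, obtained via the companion parametrization and the transposed shear.

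Case 1 follows from compact support of $\psi_{a,s,b,p,\iota}$: for $a$ small enough the support lies entirely in $D$ or $D^c$, and then the vanishing moment of $\psi^1$ in the tensor product forces the integral to be identically zero. For Case 2b ($s=s'$, $b\ne b'$) the linear term in $F$ vanishes; the natural rescaling $y_2 = a^{(1-2\alpha)/2}w$ turns $F$ into $(b'-b)w^2 + O(a^{1/2})$, yielding
\begin{align*}
a^{(1-2\alpha)/2}\,\phi^1(0)\int G((b'-b)w^2)\,dw + o(a^{(1-2\alpha)/2}),
\end{align*}
whence the transform behaves like $a^{(2-\alpha)/2}$, and the nonzeroness of the integral $\int_{x_1\le\pm x_2^2}\psi^1$ together with $\phi^1(0)\ne 0$ delivers the matching lower bound \eqref{eq:thePositiveLimit}. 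For Case 2c ($s=s'$, $b=b'$) only the cubic remainder $a^{-1}H(a^\alpha y_2) = O(a^{3\alpha-1}|y_2|^3)$ survives; the upper bound $a^{(1+\alpha)/2}$ is immediate, and the lower bound uses $G(F(y_2)) = G(0) + O(a^{3\alpha-1})$, valid only when $\alpha>1/3$, together with $G(0)\int\phi^1 = -\int_{x_1\ge 0}\psi\,dx$ obtained from the single vanishing moment of $\psi^1$.

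Case 2a, with $s\ne s'$, is where $F'(0)=(s'-s)a^{\alpha-1}$ is large and $F$ is a diffeomorphism on the relevant bounded window. The substitution $t = F(y_2)$ converts the integral into $\int G(t)\phi^1(F^{-1}(t))\,dt/F'(F^{-1}(t))$, and the plan is to Taylor-expand $\phi^1$ around $0$ to order $M$ (permissible since $\phi^1\in C^L$, $L>M$) while also expanding $1/F'$ as a series in $a^\alpha$. The terms of the resulting expansion involve integrals of the form $\int G(t)\,t^k\,dt$, which vanish for $0\le k\le M-2$ by the vanishing moments inherited by $G$; the first surviving term carries an overall factor $(1/F'(0))^{M+1} = O(a^{(M+1)(1-\alpha)})$, and combining this with the $a^{(1+\alpha)/2}$ prefactor gives the claimed rate. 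The cone-mismatch subcase ($\iota\ne\iota'$) is handled by reparametrizing the boundary as the graph of $g_{-\iota'}$; the effective slope difference is then ``infinite'' in the rescaled picture, so a variant of the same vanishing-moment argument produces a bound at least as strong.

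The main obstacle will be Case 2a: bookkeeping the cancellations so that exactly $M+1$ powers of $a^{1-\alpha}$ are gained and making sure the curvature-correction terms (coming from $b'-b$ and the $H$-remainder) do not spoil the count. The cone-mismatch version additionally needs a careful geometric argument showing that the inverse of $F$ (now coming from the orthogonal parametrization) still lands in a region where $\phi^1$ can be Taylor-expanded with the required remainder control. The other cases are then essentially direct rescaling and majorization estimates; the cone $\iota=-1$ variants follow by the reflection symmetry built into Definition~\ref{def:ConeAdaptedTransform}.
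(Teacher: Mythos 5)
Your overall strategy --- normalize via $y=A_{a,\alpha}^{-1}S^{(2)}_{-(s,b)}(x-p)$, reduce the transform to $a^{(1+\alpha)/2}\int G(F(y_2))\,\phi^1(y_2)\,dy_2$ with $G$ the compactly supported antiderivative of $\psi^1$, and then rescale case by case --- is essentially the paper's proof in a slightly different packaging: the paper instead flattens the boundary by $x_2\mapsto x_2+g_1^{-1}(x_1)$ and plays the moments of $\psi^1$ directly against a Taylor expansion of $\phi^1$, which is the same cancellation you obtain through $G$. Your treatments of Case 1, Case 2(b) (rescaling $y_2=a^{(1-2\alpha)/2}w$, dominated convergence, and the conditions $\phi^1(0)\neq0$, $\int_{x_1\le -x_2^2}\psi^1\neq0$, $\int_{x_1\ge x_2^2}\psi^1\neq0$) and Case 2(c) (the $a^{3\alpha-1}$ error from $H$, whence $\alpha>1/3$) coincide with the paper's.

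The one concrete problem is the exponent count in Case 2(a), precisely the obstacle you flag. If $G$ only satisfies $\int G(t)t^k\,dt=0$ for $0\le k\le M-2$, as you state, then the first surviving term in your expansion is $k=M-1$; its coefficient in $\phi^1(F^{-1}(t))\,(F^{-1})'(t)$ carries one factor $a^{1-\alpha}$ from the Jacobian and $M-1$ factors from $F^{-1}(t)=O(a^{1-\alpha}t)$, i.e.\ $(1/F'(0))^{M}$ and not $(1/F'(0))^{M+1}$ as you assert. Your bookkeeping therefore delivers $a^{(1-\alpha)M+\frac{1+\alpha}{2}}$, one power of $a^{1-\alpha}$ short of the claim. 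To close the gap you need $\int G(t)t^{M-1}\,dt=0$ as well, i.e.\ $\int\psi^1(t)t^{M}\,dt=0$; this is exactly what the paper uses implicitly when it cancels \emph{all} monomials of degree $0\le n\le M$ in \eqref{eq:thePolynomialExpression}, so ``$M$ vanishing moments'' must be read as vanishing of the moments of order $0,\dots,M$. With that reading your $G$ kills $\int G(t)t^k\,dt$ for $0\le k\le M-1$, the first surviving term is $k=M$, and your count gives $(1-\alpha)(M+1)+\frac{1+\alpha}{2}$ as required (the Taylor remainder then needs $\phi^1\in C^{M+1}$, which is $L>M$). Finally, the cone-mismatch subcase is easier than you anticipate: there the boundary is directly the graph $x_2=g_{-1}(x_1)$ with slope $|s'|\le1$ at the origin, so it plays exactly the role your $F^{-1}$ (equivalently the paper's $g_1^{-1}$) plays in the same-cone case --- a bounded-slope graph over $x_1$ --- and no additional geometric argument about invertibility or the range of $F^{-1}$ is needed; this is how the paper dispatches that subcase in one line.
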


\noindent
For example, for $\alpha = 1/3$ and $M=8$ one obtains the following decay rates
\begin{center}
\begin{tabular}{l|l|c}
	1. & $p \not\in \partial D$ & $a^N$
	\\
	\hline
	2.(a) & $p \in \partial D$, $(s,\iota) \neq (s',\iota ')$ & $a^{20/3}$
	\\
	\hline
	2.(b) & $p \in \partial D$, $(s,\iota) = (s',\iota ')$, $b \neq b'$ & $a^{5/6}$
	\\
	\hline
	2.(c) & $p \in \partial D$, $(s,\iota) = (s',\iota ')$, $b = b'$ & $a^{2/3}$
\end{tabular}
\end{center}
Hence, one has a clear separation of the decay rates for the different situations,
\ms{which allows to locate the boundary $\partial D$ and, in addition,
to extract the type $(s',b',\iota')$, in the sense of Definition~\ref{def:boundary_local_second_order}, of points $x\in\partial D$.}
\cl{This, in turn, allows to determine the normal direction as $(1,-s')$ or $(-s',1)$, depending on $\iota'$, and the curvature using Eq.~\ref{eq:curvature}.}

\ms{After this illustration, let us turn to the proof of} Theorem~\ref{thm:mainClass}.

\begin{proof}

\textbf{Part 1.}

This is trivial, if $\psi$ has one vanishing moment and compact support, since then
$$
\mathcal{BS}^{(\alpha)}_\psi(\chi_D)(a,s,b,p,\iota) = 0 \text{ for } a \text{ small enough. }
$$

\textbf{Part 2. (a)}

Let us first assume $\iota = \iota' = 1$ and $s \neq s'$. If $\iota = \iota' = -1$ the following computations can be made with $x_1$ and $x_2$ interchanged and we will make a comment at the appropriate position in the proof where some change needs to be made if $\iota \neq \iota'$.

We further simplify the setup based on the equality
\begin{align}\label{eq:trafoFormula}
\langle \chi_D, \psi_{a,s,b,t,1}\rangle = \langle \chi_{\tilde{D}}, \psi_{a,0,0,(0,0),1}\rangle,
\end{align}
where $\tilde{D}\subset \R^2$ is an appropriately modified domain, $0\in \partial \tilde{D}$ and $0$ is a point of type $(s'',b'',1)$ with $s''=s'-s\neq 0$.

Hence we can make the additional simplifying assumption that $s' \neq 0$ and $s,b =0$. Moving on with the simplified setup we have that in a neighborhood of $(0,0)$ the curve $\partial \tilde{D}$ is given as the graph of $g_1$, where $g_1$ is invertible on a neighborhood of $0$ since $s'\neq 0$ and
\begin{align*} 
\langle \chi_{\tilde D}, \psi_{a,0,0,(0,0),1} \rangle  = \int_{x_2 \leq g_1^{-1}(x_1)} \psi_{a,0,0,(0,0),1}(x) dx.
\end{align*}
If we had assumed $\iota \neq \iota'$ but $\iota = 1$ then we would replace $g_1^{-1}$ above by $g_{-1}$.
Next we can apply the transformation \ms{$x_2 \mapsto x_2 + g^{-1}_1(x_1)$} to obtain
\begin{align*}
\langle\chi_{\tilde{D}}, \psi_{a,0,0,(0,0),1}\rangle = \int_{x_2 \leq 0} a^{-\frac{1+\alpha}{2}}\psi^1(a^{-1}x_1)\phi^1(a^{-\alpha}(x_2+g^{-1}_1(x_1))) dx.
\end{align*}
Using the compact support of $\psi^1$ and $\phi^1$ we can find $c>0$ such that
\begin{align}\label{eq:WeNeedToUseTHeVMNow}
\langle\chi_{\tilde{D}}, &\psi_{a,0,0,(0,0),1}\rangle \nonumber\\
& \quad\;\; = \int \limits_{\substack{x_2 \leq 0\\ x \in A_{a,\alpha}([-c,c]^2)}} a^{-\frac{1+\alpha}{2}}\psi^1(a^{-1}x_1)\phi^1(a^{-\alpha}(x_2 + g^{-1}_1(x_1))) dx.
\end{align}
Let $x_2 \in [-a^\alpha c, a^\alpha c]$ be fixed. Computing Taylor approximations of $\phi^1\in C^L$ and $g^{-1}_1$ we can write for $x_1 \in [-a c,a c]$
\begin{align}\label{eq:thePolynomialExpression}
\phi^1(a^{-\alpha}(x_2-g^{-1}_1(x_1))) = \sum_{0\leq n\leq M}c_n(x_2)(a^{-\alpha} x_1)^n + O(a^{(1-\alpha)(M+1)}),
\end{align}
with $|c_n(x_2)|$ bounded from above independently of $x_2$.
Invoking the vanishing moments property of $\psi^1$ (note $L\ge M+1$) we obtain that the lower order monomials of \eqref{eq:thePolynomialExpression} get canceled and \eqref{eq:WeNeedToUseTHeVMNow} can be estimated like
\begin{align*}
|\langle\chi_{\tilde{D}}, \psi_{a,0,0,(0,0),1}\rangle| \lesssim \int \limits_{x\in A_{a,\alpha}([-c,c]^2) } a^{-\frac{1+\alpha}{2}} a^{(1-\alpha)(M+1)}dx \lesssim a^{(1-\alpha)(M+1) + \frac{1+\alpha}{2}}.
\end{align*}
This finishes the Part 2 (a) of the proof.

\textbf{Part 2. (b)}
By the argument of \eqref{eq:trafoFormula} we can assume $\iota = \iota'$, $s = s'=0$, $t = t' = (0,0)$ and $b' \neq 0, b = 0$.
Now we have
\begin{align*}
\langle \chi_{\tilde D}, \psi_{a,0,0,(0,0),1}\rangle  = \int_{x_1 \leq b' x_2^2 + H(x_2)} \psi_{a,0,0,(0,0),1}(x) dx.
\end{align*}
At this point we may assume that $b'<0$ and we will describe at the end of the proof what has to be adapted if $b'>0$. Since $\psi$ has compact support we can assume that there exists some $c>0$ such that $\suppp \psi_{a,0,0,(0,0), 1} \subset A_{a,\alpha}([-c,c]^2)$. Thus, we can compute the following estimate
\begin{align*}
&\int_{x_1 \leq b' x_2^2 + H(x_2)} \psi_{a,0,0,(0,0),1}(x) dx \nonumber\\
= &\int \limits_{\substack{x_1 \leq b' x_2^2 + H(x_2)\\ |x_1|\leq ac, |x_2|\leq a^\alpha c}} \psi_{a,0,0,(0,0),1}(x) dx \nonumber \\
=&\int \limits_{\substack{x_1 \leq b' x_2^2\\ |x_1|\leq ac, |x_2|\leq a^\alpha c}} \psi_{a,0,0,(0,0),1}(x) dx \nonumber - \int \limits_{\substack{ b' x_2^2 \leq x_1 \leq b' x_2^2 + H(x_2) \\ |x_1|\leq ac, |x_2|\leq a^\alpha c}} \psi_{a,0,0,(0,0),1}(x) dx \\ & \quad + \int \limits_{\substack{ b' x_2^2 + H(x_2) \leq x_1 \leq b' x_2^2  \\ |x_1|\leq ac, |x_2|\leq a^\alpha c}} \psi_{a,0,0,(0,0),1}(x) dx\nonumber \\
=: &\:\mathrm{I} - \mathrm{II} + \mathrm{III}.
\end{align*}
We use the property that $|H(x_2)| \leq c_H |x_2| x_2^2$ for some $c_H>0$ and hence for $|x_2| < -b'/(2c_H)$ we have that $|H(x_2)| \leq (-b'/2)x_2^2$. Since $x_1 \leq (b'/2) x_2^2$, $b'<0$ and $|x_1|\leq ac$ implies that $|x_2| \leq \sqrt{- 2ac / b'}$, we can estimate
\begin{align*}
 \mathrm{II} = &\int \limits_{\substack{ b' x_2^2 \leq x_1 \leq b' x_2^2 + H(x_2) \\ |x_1|\leq ac, |x_2|\leq a^\alpha c}} \psi_{a,0,0,(0,0),1}(x) dx\\
 = &\int \limits_{\substack{ b' x_2^2 \leq x_1 \leq b' x_2^2 + H(x_2) \\ |x_1|\leq ac, |x_2|\leq \sqrt{- 2ac / b'}}} \psi_{a,0,0,(0,0),1}(x) dx\\
 \lesssim & \int \limits_{\substack{ b' x_2^2 \leq x_1 \leq b' x_2^2 + H(x_2) \\ |x_1|\leq ac, |x_2|\leq \sqrt{- 2ac / b'}}} a^{-\frac{1+\alpha}{2}} dx \lesssim \ms{a^{-\frac{1+\alpha}{2}} a a^{\frac{1}{2}} = a^{\frac{2-\alpha}{2}},}
\end{align*}
where the second to last estimate follows due to $|H(x_2)| \leq c_H |x_2| x_2^2$. The estimate of $\mathrm{III}$ follows analogously.

We proceed by estimating $\mathrm{I}$. We have that
\begin{align*}
\ms{\mathrm{I} =  \int \limits_{\substack{x_1 \leq b' x_2^2\\ |x_1|\leq ac, |x_2|\leq \sqrt{- ac / b'}}} \psi_{a,0,0,(0,0),1}(x) dx, }
\end{align*}
since, as already previously observed, \ms{$x_1 \leq b' x_2^2$}, $b'<0$ and $|x_1|\leq ac$ implies that $|x_2| \leq \sqrt{- ac / b'}$. As a next step, we apply a transformation of
$(x_1,x_2) \mapsto (a x_1, \sqrt{-a/b'}x_2)$. This yields
\begin{align*}
\mathrm{I} = a^{\frac{2-\alpha}{2}} \ms{\sqrt{-1/b'}} \int \limits_{\substack{x_1 \leq -x_2^2\\ |x_1| \leq c, |x_2| \leq \sqrt{c} }} \psi^1(x_1) \phi^1(a^{\frac{1}{2}-\alpha} \ms{\sqrt{-1/b'}}x_2)dx.
\end{align*}
Since $\alpha< \frac{1}{2}$, we observe that, due to the continuity of \ms{$\phi^1$}, we have $\phi^1(a^{\frac{1}{2}-\alpha} \ms{\sqrt{-1/b'}} x_2) \to \phi^1(0)$ pointwise \ms{for all $(x_1,x_2)\in\R^2$}. Moreover, since $\psi^1$ is bounded, we have by dominated convergence, that
\begin{align*}
 \int \limits_{\substack{x_1 \leq -x_2^2\\ |x_1| \leq c, |x_2| \leq \sqrt{c} }} \psi^1(x_1) \phi^1(a^{\frac{1}{2}-\alpha}\sqrt{-b'} x_2)dx \to \phi^1(0)\int \limits_{\substack{x_1 \leq -x_2^2 \\ |x_1| \leq c, |x_2| \leq \sqrt{c} }} \psi^1(x_1) dx.
\end{align*}
\ms{Since furthermore $\suppp \psi^1 \subset [-c,c]$, we conclude}
\begin{align*}
a^{\frac{\alpha-2}{2}}\ms{ \mathrm{I} \sqrt{-b'}} \to \phi^1(0)\int \limits_{x_1 \leq -x_2^2} \psi^1(x_1) dx.
\end{align*}
By the compact support of $\psi^1$ we have that $\int \limits_{x_1 \leq -x_2^2} \psi^1(x_1) dx <\infty$ which yields the upper bound of the statement of \ms{2 (b)}. Additionally, we obtain \eqref{eq:thePositiveLimit} if $\phi^1(0)\int_{x_1 \leq -x_2^2} \psi^1(x_1) dx \neq 0$.

In the event that $b'>0$ one has by using that $\int\psi_{a,0,0,(0,0),1}dx = 0$
\begin{align*}
\int_{x_1 \leq b' x_2^2 + H(x_2)} \psi_{a,0,0,(0,0),1}(x) dx = \int_{x_1 \geq b' x_2^2 + H(x_2)} \psi_{a,0,0,(0,0),1}(x) dx.
\end{align*}
\ms{From this point on,} the argument is almost identical. The condition for positivity of \eqref{eq:thePositiveLimit}\ms{, however,} becomes $\phi^1(0)\int_{x_1 \geq x_2^2} \psi^1(x_1) dx \neq 0$.

\textbf{Part 2. (c)}
The upper bound follows trivially from the boundedness and compact support of $\psi$. Concerning the lower bound, we assume using the transformation of \eqref{eq:trafoFormula} that
$\iota = \iota'$, $s = s'=0$, $t = t' = (0,0)$ and $b'= b = 0$. The substitution $x\mapsto A_{a, \alpha} x$ yields
\begin{align*}
\int_{x_1 \leq H(x_2)} \psi_{a,0,0,(0,0),1}(x) dx &= a^{\frac{1+\alpha}{2}}\int_{a x_1 \leq H(a^{\alpha} x_2)} \psi_{1,0,0,(0,0),1}(x) dx
\\[7pt]
&= a^{\frac{1+\alpha}{2}}\int_{ x_1 \leq a^{-1} H(a^{\alpha} x_2)} \psi_{1,0,0,(0,0),1}(x) dx.
\end{align*}
We notice that since $|H(z)z^{-3}| \leq c_H$ for $z \to 0$ we have that
$a^{-1} |H(a^{\alpha} x_2)| \le c_H a^{-1}a^{3\alpha}|x_2|^3 \lesssim a^{3\alpha - 1} $. We obtain that
\begin{align*}
& a^{\frac{1+\alpha}{2}}\int_{ x_1 \leq a^{-1} H(a^{\alpha} x_2)} \psi_{1,0,0,(0,0),1}(x) dx\\[7pt]
= \ & a^{\frac{1+\alpha}{2}}\int_{ x_1 \leq 0} \psi_{1,0,0,(0,0),1}(x) dx + a^{\frac{1+\alpha}{2}}\int_{ 0\leq x_1 \leq a^{-1} H(a^{\alpha} x_2)}\psi_{1,0,0,(0,0),1}(x) dx \\[7pt]
&- a^{\frac{1+\alpha}{2}}\int_{ a^{-1} H(a^{\alpha} x_2)\leq x_1 \leq 0}\psi_{1,0,0,(0,0),1}(x) dx\\[7pt]
= \ & a^{\frac{1+\alpha}{2}}\int_{ x_1 \geq 0} \psi_{1,0,0,(0,0),1}(x) dx + O(a^{\frac{1+\alpha}{2}} a^{3\alpha - 1} ).
\end{align*}
If $\alpha >1/3$ this yields the result.
\end{proof}

\section{Numerical Experiments}
\label{sec:numerics}

\begin{figure}
	\begin{center}
	\includegraphics[width=0.42\textwidth]{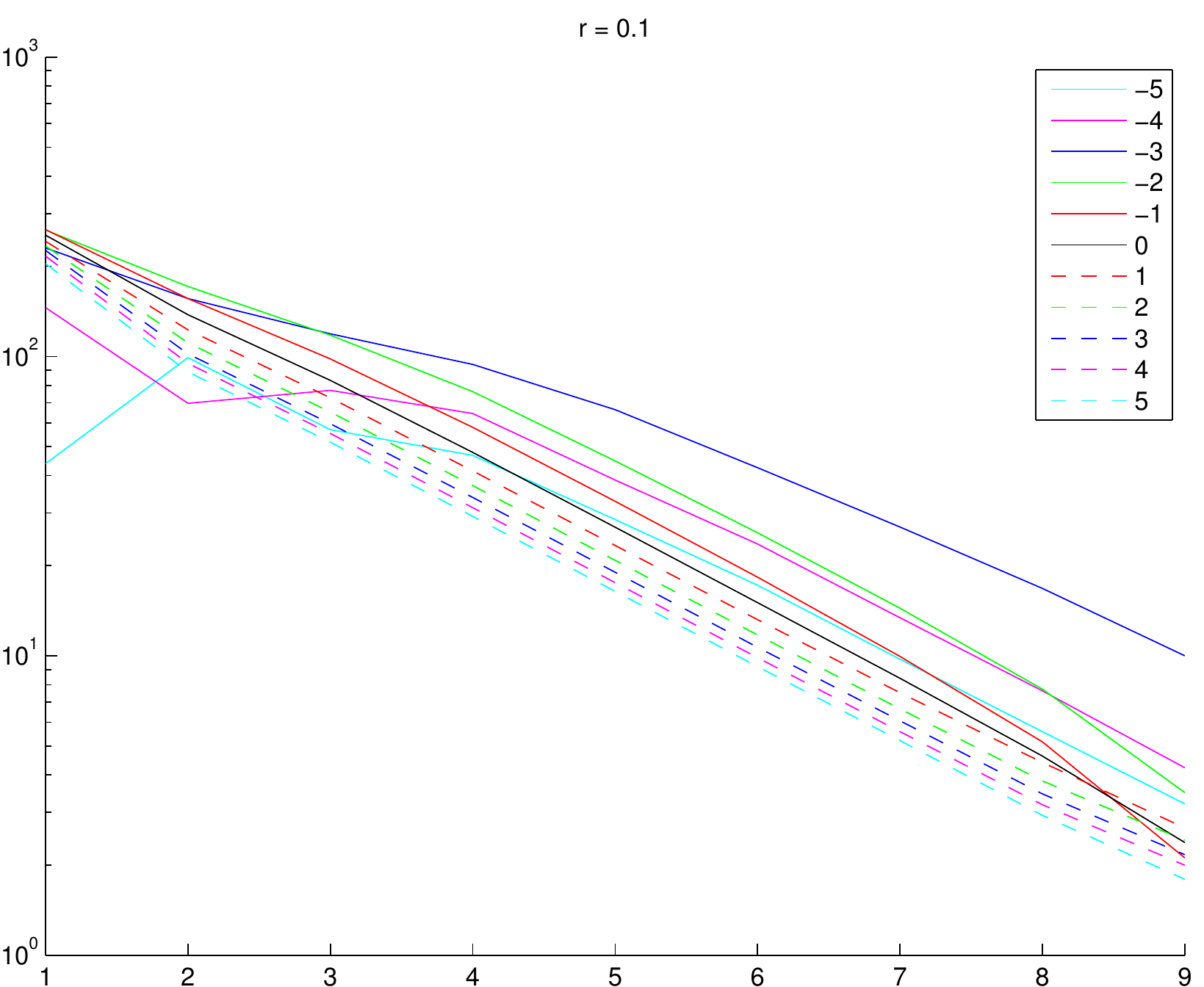}
	\includegraphics[width=0.42\textwidth]{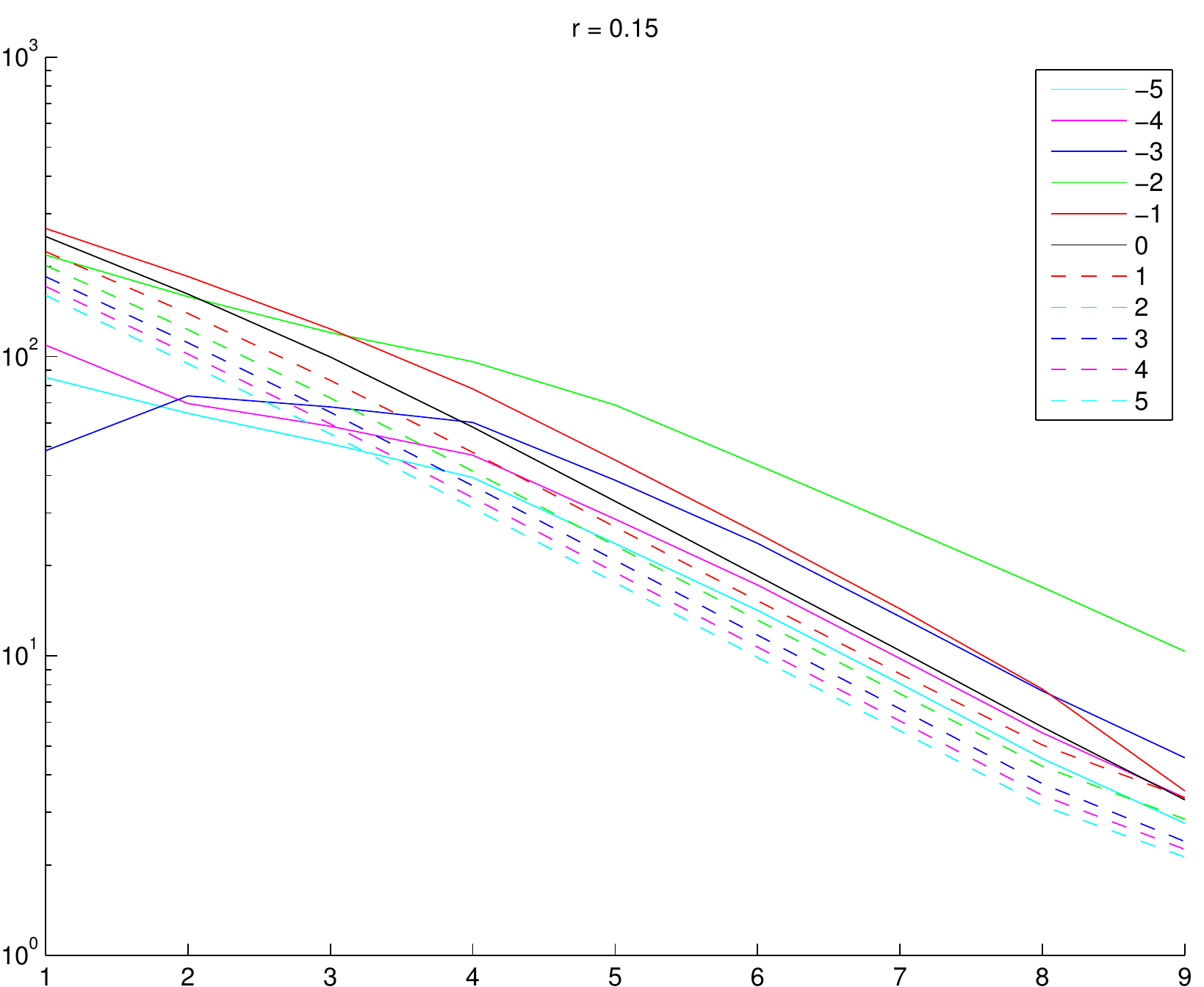}
	\\
	\includegraphics[width=0.42\textwidth]{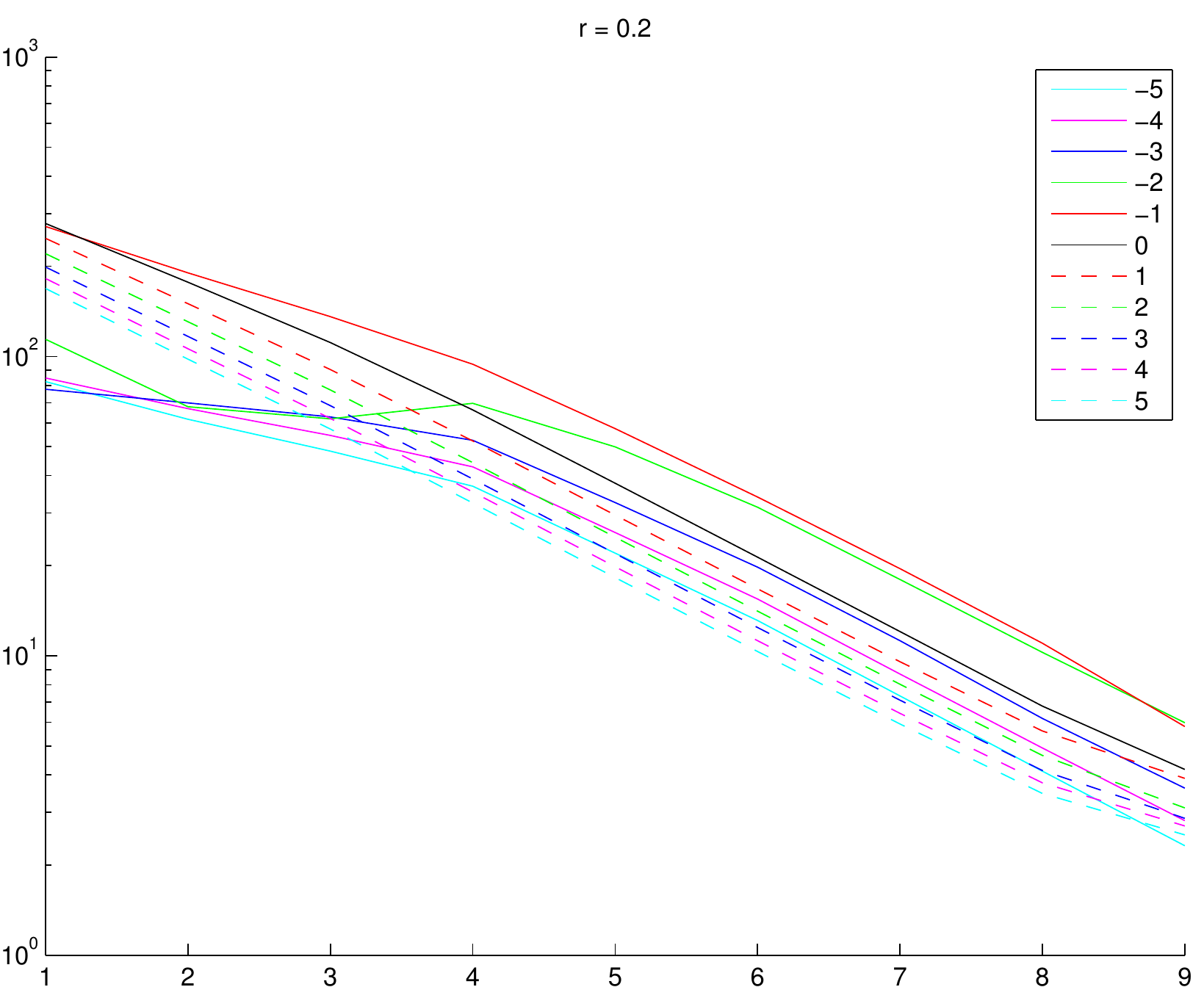}
	\includegraphics[width=0.42\textwidth]{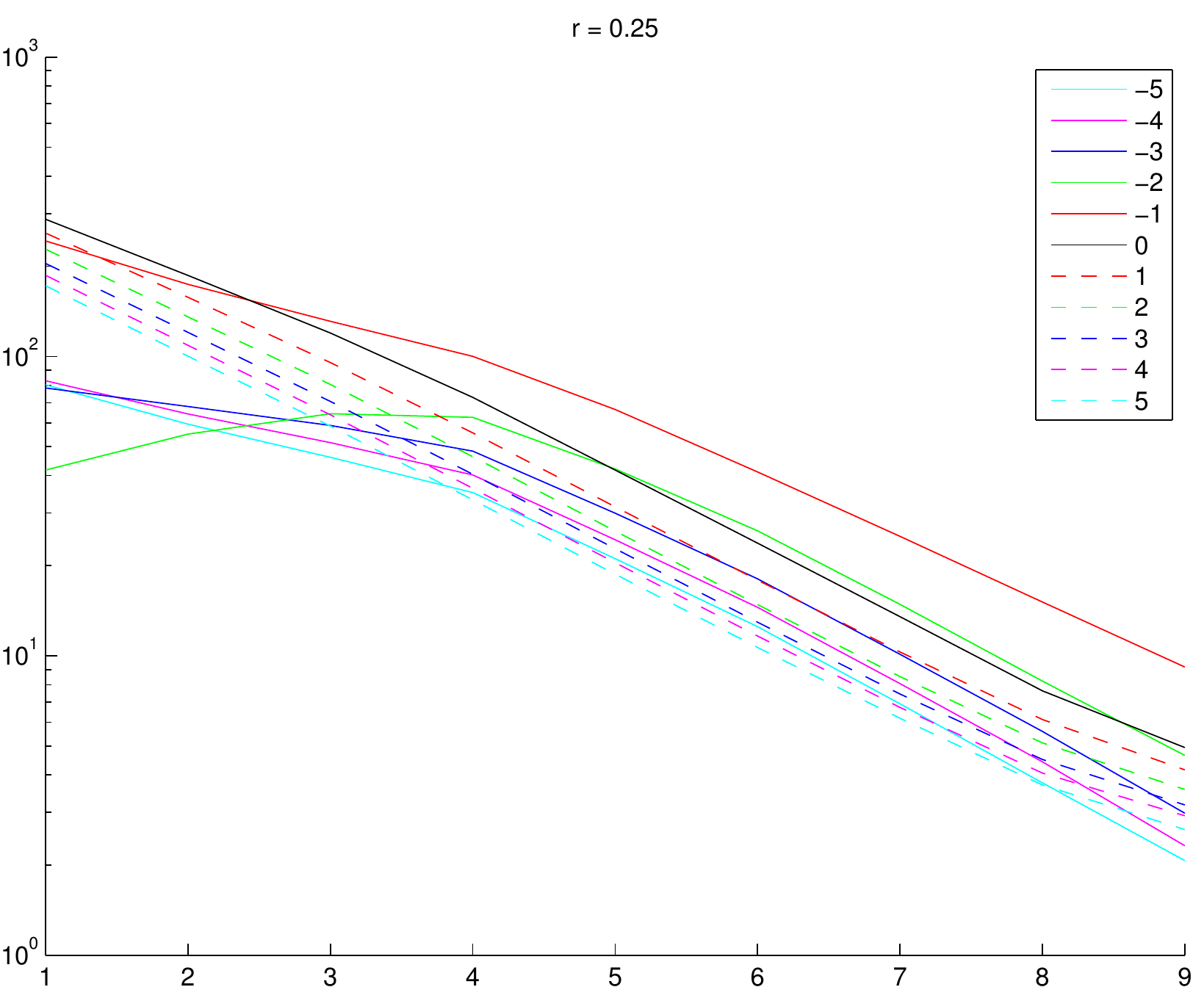}
	\\
	\includegraphics[width=0.42\textwidth]{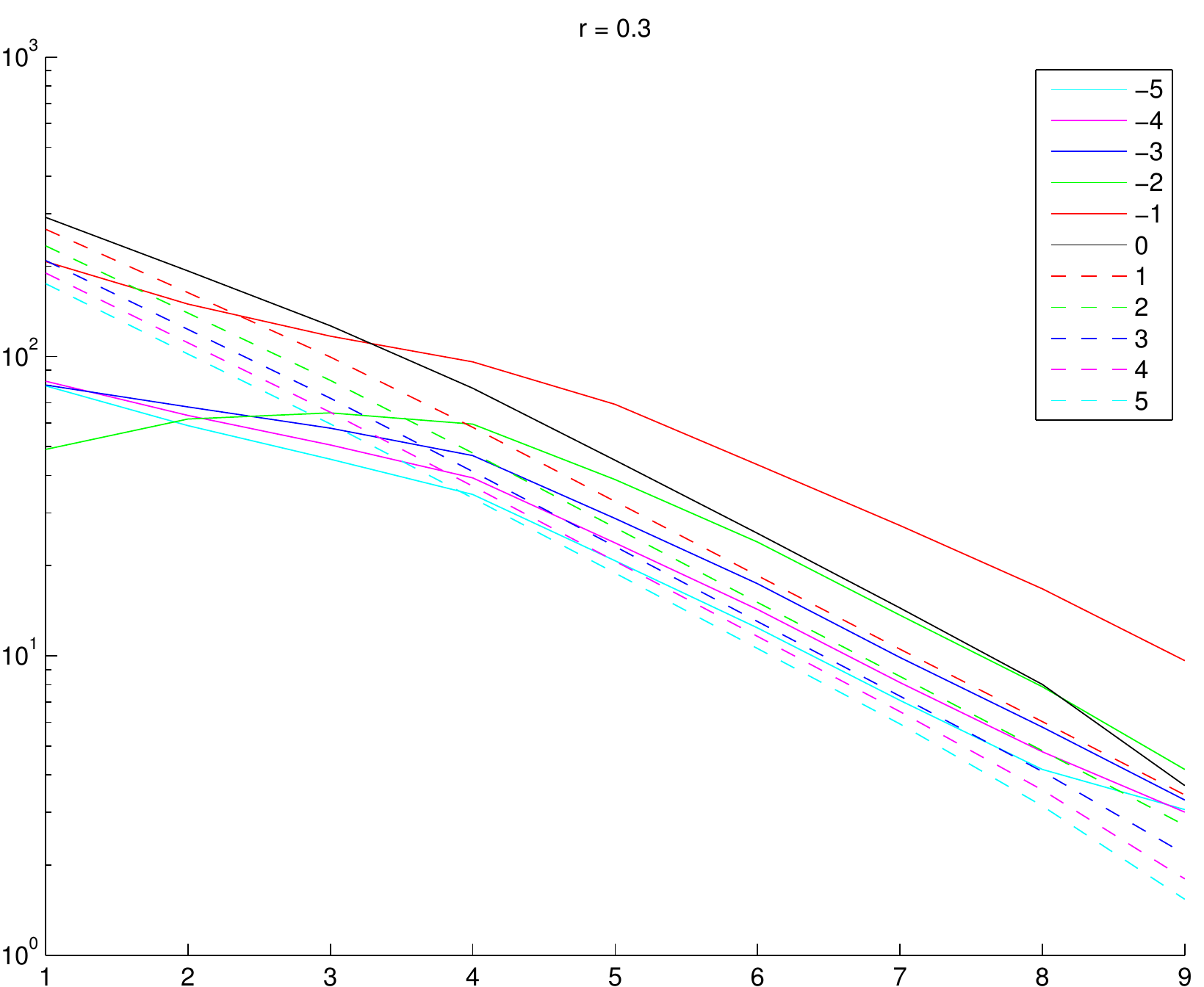}
	\includegraphics[width=0.42\textwidth]{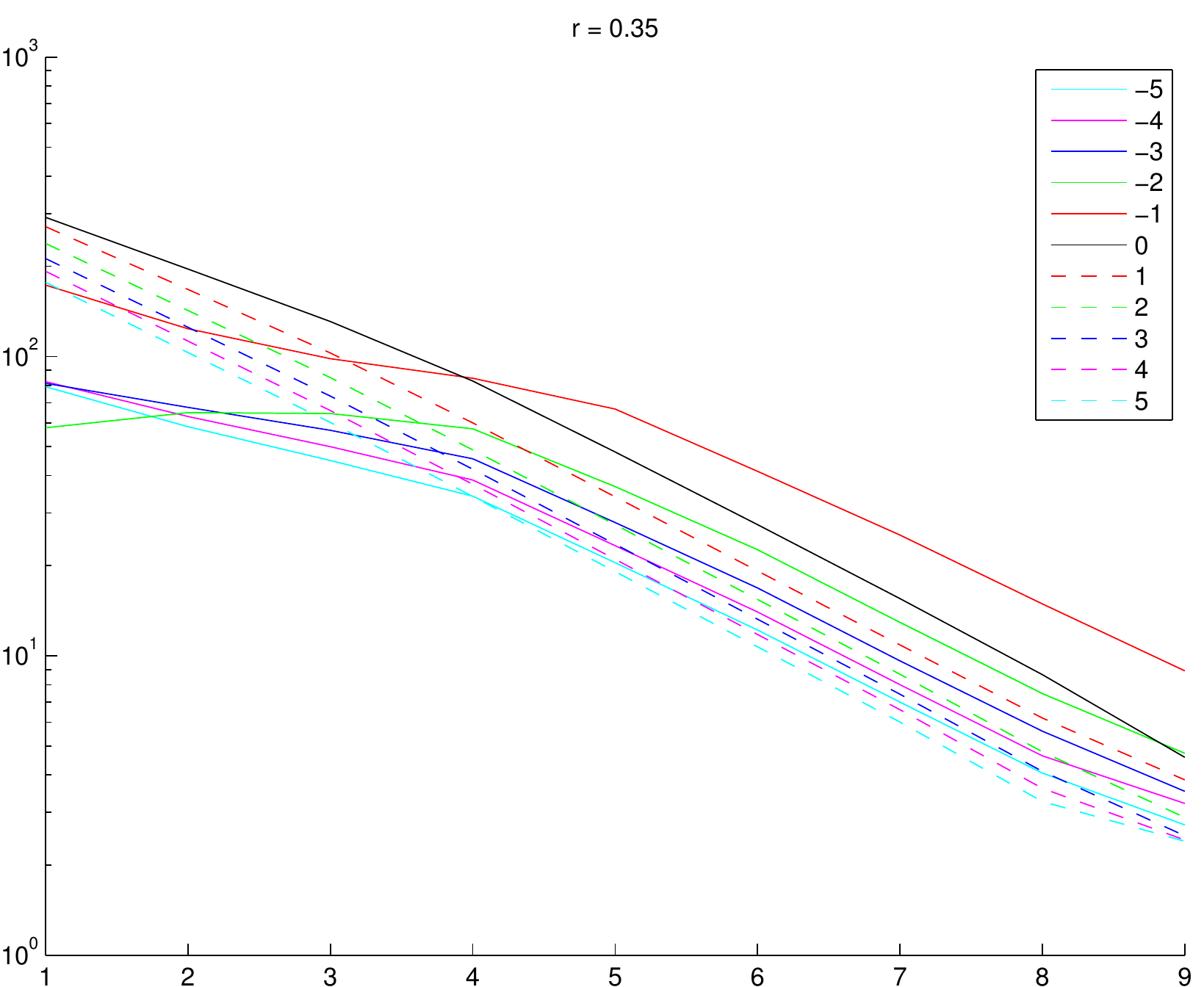}
	\\
	\includegraphics[width=0.42\textwidth]{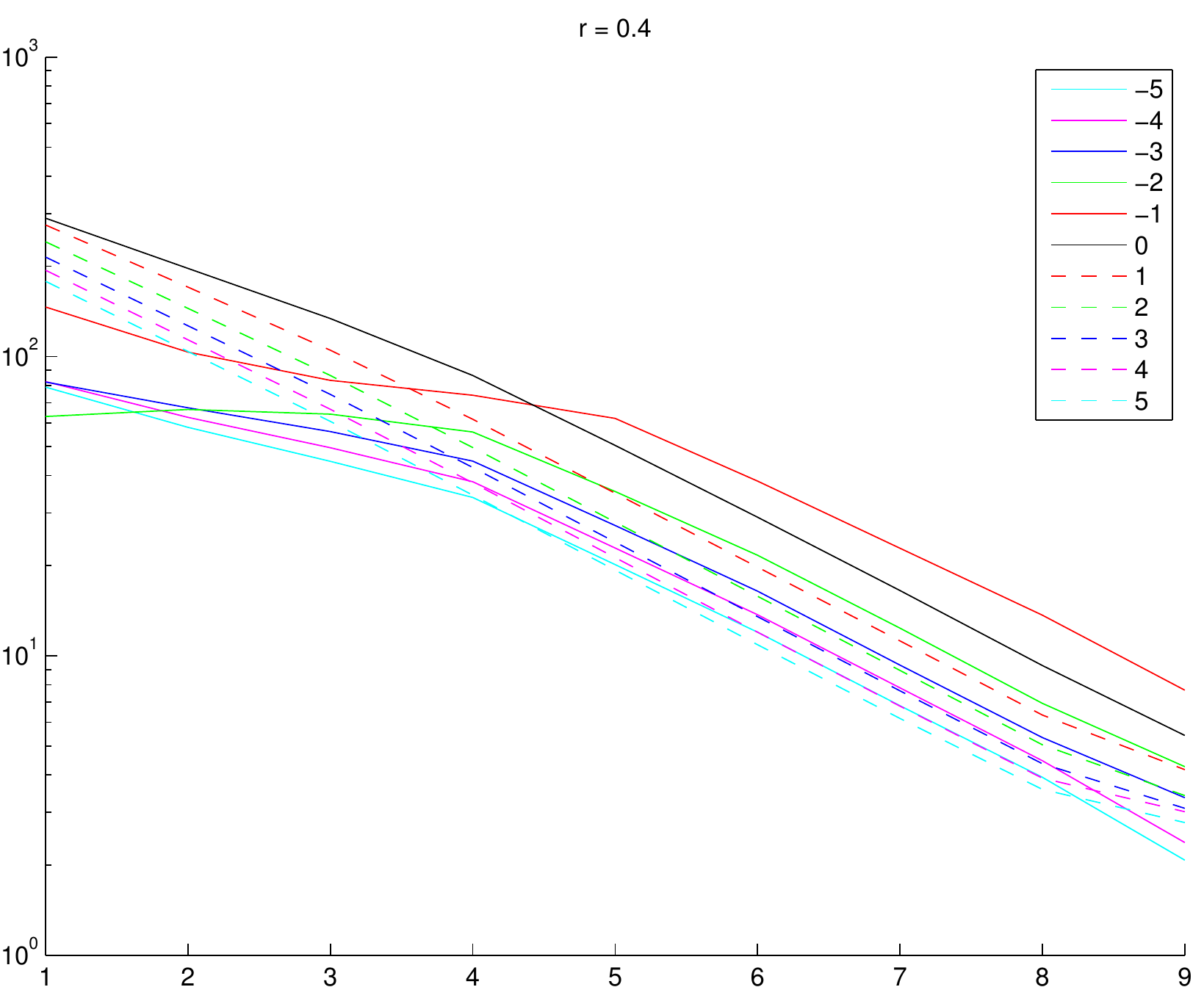}
	\includegraphics[width=0.42\textwidth]{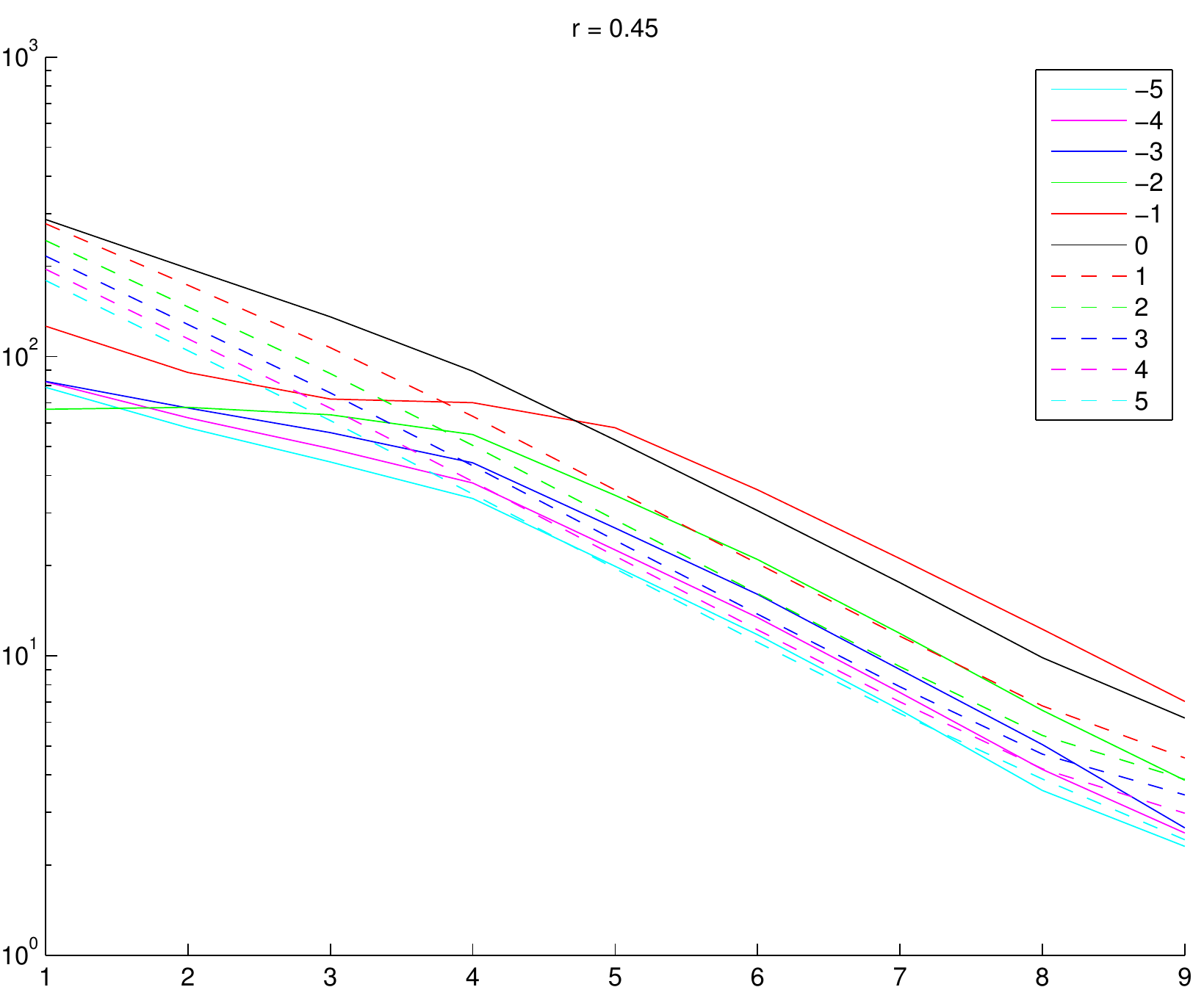}
	\end{center}
	\caption{Decay rate of bendlet transform for circles with different radii (from top left to bottom right): r = 0.10, 0.15, 0.20, 0.25, 0.30, 0.35, 0.40, 0.45 (with respect to a normalized image domain $[-1.0,1.0] \times [-1.0,1.0]$).}
	\label{fig:analyse_curvature}
\end{figure}

\cl{To numerically verify the results of the last section, we implemented bendlets as an extension to the ShearLab toolbox~\cite{Kutyniok2016b}.}

In our Shearlab implementation, we use a separable generator $\psi^1\otimes \phi^1$ with a spline for the low-frequency window $\phi^1$ and a Daubechies wavelet with eight vanishing moments for the high frequency one $\psi^1$.
The bendlet elements are defined in the spatial domain and ``rasterized'' onto the pixel domain with shearing and bending being implemented there.
Transforms are either computed in the frequency domain, following the existing implementation in Shearlab, or in the spatial domain.
The latter one is used for the computation of transforms up to fine levels since it allows us to exploit the compact support of the elements to keep the computations tractable. (We compute transforms up to level 9, corresponding to an image resolution of $16384 \times 16384$ pixels. Such a resolution is out of reach using the existing ShearLab implementation even on high-end computers.)

Experimental results demonstrating the decay rates of the bendlet transform for $\alpha = 0.335$ as a function of curvature are presented in Fig.~\ref{fig:analyse_curvature}.
As input signal we used circles with varying radii measured on a unit square domain.
In the figure one sees the expected curvature sensitivity of the decay rate as a function of the bending parameter, i.e. for small radius and large curvature only the coefficients for elements with correspondingly large bending decay slowly.
As the radius increases, and hence the curvature becomes smaller, bendlets with smaller bending parameter decay slowest.
Results for experiments to even higher levels than in Fig.~2, \cl{obtained with Mathematica for an analytic signal and analytic bending,} can be found in the supplementary material.


\section{Conclusion and Future Work}
\label{sec:conclusion}

\cl{The present paper introduced bendlets, a second-order shearlet \ms{system} based on scaling, translation, shearing, and bending of a compactly supported generator.
The associated bendlet transform enables to classify 2D singularities in an image more precisely than ordinary first-order shearlets, as is exemplified by
the classification result in Theorem~\ref{thm:mainClass} that also includes curvature.}

\cl{Curvature plays a vital role not only in 2D but also for the analysis and description of 3D objects.
An extension of our construction to 3D could hence be useful,
potentially following existing work by Kutyniok and Petersen~\cite{Kutyniok2016}.}

It would also be interesting to investigate properties of and applications for third or even higher-order shearlets, whose constructions we also detailed in this paper. \cl{For example, }\ms{if one is interested in higher-order classification of boundaries, \cl{as could be used in bag-of-feature type algorithms~\cite{Varma2005},} these transforms might provide a useful tool.
For these applications also no frame or approximation properties are required and they could hence provide a first application also of bendlets.}

\ms{As another line of future research, approximation properties of bendlets and their higher-order siblings should be analyzed.}
As the following heuristic argument demonstrates, discrete bendlet systems are, through their adaptivity to curved line singularities, promising candidates to attain good approximation rates and we expect \cl{discrete} higher-order shearlets to provide $N$-term approximation rates beyond the $N^{-2}$ barrier for piecewise smooth images with boundaries of regularity $C^k$, $k>2$.


\ms{
Consider the bivariate windowed signal
$h=\chi_{\{x_1\ge px^2_2\}}\omega$ with $p\in\R$ and
$\omega\in C^\infty_c(\R^2)$, featuring a quadratic discontinuity.
Via the transformation $h\mapsto h(Q_p^{-1}(\cdot))$ with $Q_p:(x_1,x_2)\mapsto(x_1-px_2^2,x_2)$, the signal becomes $\tilde{h}=\chi_{\{x_1\ge 0\}}\tilde{\omega}$, with modified window $\tilde{\omega}=\omega(Q^{-1}_p(\cdot))\in C_c^\infty(\R^2)$.
The discontinuity of $\tilde{h}$ is \cl{now} straight, for which
according to~\cite[Cor.~5.4]{MS2016} ordinary $0$-scaled representation systems, such as $0$-curvelets or $0$-shearlets,
allow quasi-optimal approximation.} 

\ms{The first-order $0$-shearlets from $\mathrm{SH}_{\psi}^{(1,0)}$ are
functions of the form $\psi_{a,s,0,t}=\pi^{(1,0)}(a,s,t)\psi$, where $a\in\R^{+}$, $l\in\R$ and $t=(t_1,t_2)\in\R^2$.
Due to the relation $\psi_{a,s,0,t}(Q_p(\cdot))=\psi_{a,s+2pt_2,p,t_1+pt_2^2,t_2}(\cdot)$,
the back transformation $\psi\mapsto\psi(Q_p(\cdot))$
turns $\mathrm{SH}_{\psi}^{(1,0)}$ into the system $\{ \psi_{a,s,p,t} \}_{(a,s,t)\in\R^{+}\times\R\times\R^2}$ of second-order $0$-shearlets with fixed bending $p$.
As a consequence, this subsystem of $\mathrm{SH}_{\psi}^{(2,0)}$ is equally well-adapted to resolve $h$ as $\mathrm{SH}_{\psi}^{(1,0)}$ is
for the resolution of $\tilde{h}$.}

%


\ms{An important question connected to the approximation properties of discrete bendlet systems is whether they form a frame.
Once the frame property has been verified, approximation rates can be established via the decay rates of the continuous transform, which were theoretically as well as numerically analyzed in the present paper. However,
the analysis of the frame property appears to be difficult.
}

\pp{
A natural approach would be first to understand under which circumstances bendlets form a continuous frame. Afterward, the theory of \cite{contFrames} ensures
that a \ms{sufficiently dense discretization leads to} a discrete frame. A standard method to establish the continuous frame property is to use the underlying group structure, understand the transform as a unitary representation, and demonstrate that this representation is square-integrable. However, in the case of the bendlet transform we do not have an underlying group. Thus it will be necessary to establish a different approach.
}

\section*{Acknowledgements}

C. Lessig was been partially supported by the ERC through grant ERC-2010-StG 259550 (``XSHAPE'').
He would also like to thank Eugene Fiume for continuing support.
P. Petersen acknowledges support from the DFG Collaborative Research Center TRR 109 ``Discretization in Geometry and Dynamics''. He would like to thank Thomas Fink for helpful discussions on the topic. Furthermore, P. Petersen and M. Sch{\"a}fer express their gratitude for the support of the Berlin Mathematical School.

\section*{References}

\bibliographystyle{alpha}
\bibliography{Bendlets}


\end{document}